\documentclass[reqno]{amsart}

\usepackage{amsmath}
\usepackage{amssymb}
\usepackage{amsfonts}
\usepackage{amsthm}

\usepackage{subcaption}

\usepackage{hyperref}

\usepackage{graphicx}
\usepackage{tikz}

\usepackage{algorithm}
\usepackage{algpseudocode}

\newtheorem*{thm}{Theorem}
\newtheorem{lemma}{Lemma}

\begin{document}

\title[]{Juniper Green and the \\Gallai-Edmonds Decomposition}

\author[]{Tony Zeng}
\address{Department of Mathematics, University of Washington, Seattle, WA 98195, USA}
\email{txz@uw.edu}

\begin{abstract}
    Juniper Green is a simple combinatorial game invented by Rob Porteous and popularized by Ian Stewart. It was originally designed to familiarize school children with the concepts of multiplication and division. We analyze this elementary game through a completely different lens and show that it recovers the Gallai-Edmonds decomposition of the divisibility graph on the vertex set $V= \left\{1,2,\dots, n\right\}$. This characterizes the winning moves of the game; as a byproduct, we show that this decomposition seems to have many interesting and curious patterns that are currently unexplained.
\end{abstract}

\maketitle

\section{Introduction}
\subsection{History.} Juniper Green is a simple combinatorial game that was originally invented by Rob Porteous to teach children about multiplication and division (Juniper Green is the name of the school where Porteous taught). Porteous explained it to his father, the mathematician Ian Porteous, who in turn explained it to Ian Stewart, who then popularized it in a 1997 article \cite{stewart_art} in \textit{Scientific American} and further described in his 2010 book \cite{stewart}. The history may be more complicated, Paul Blatz \cite{stewart1997lore} recalls that the game was discussed in a number theory
course given at Princeton University by Eugene Wigner in the late 1930s. The game has since also been discussed in the pedagogical literature \cite{billstein2013problem, stallings1999juniper, uugurel2008matematik}.

\subsection{Rules.}
The game involves two players and is played on \([n] = \left\{1,2,\dots, n\right\}\) subject to the following rules. 
\begin{enumerate}
    \item Players take turns selecting numbers from \([n]\) such that each selected number must be a factor or multiple of the previously selected number.
    \item Once a number has been used, it cannot be selected again.
    \item A player loses if they have no legal moves.
\end{enumerate}

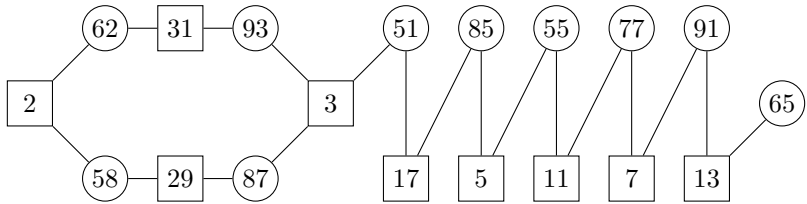
\begin{figure}[h!]
    \centering
    \begin{tikzpicture}
        \node[draw, inner sep=0pt, minimum size=0.6cm] (2) at (0,0) {\(2\)};
        \node[draw, circle, inner sep=0pt, minimum size=0.6cm] (62) at (1, 1) {\(62\)};
        \node[draw, circle, inner sep=0pt, minimum size=0.6cm] (58) at (1, -1) {\(58\)};
        \node[draw, inner sep=0pt, minimum size=0.6cm] (31) at (2, 1) {\(31\)};
        \node[draw, inner sep=0pt, minimum size=0.6cm] (29) at (2, -1) {\(29\)};
        \node[draw, circle, inner sep=0pt, minimum size=0.6cm] (93) at (3, 1) {\(93\)};
        \node[draw, circle, inner sep=0pt, minimum size=0.6cm] (87) at (3, -1) {\(87\)};
        \node[draw, inner sep=0pt, minimum size=0.6cm] (3) at (4, 0) {\(3\)};
        \node[draw, circle, inner sep=0pt, minimum size=0.6cm] (51) at (5, 1) {\(51\)};
        \node[draw, inner sep=0pt, minimum size=0.6cm] (17) at (5, -1) {\(17\)};
        \node[draw, circle, inner sep=0pt, minimum size=0.6cm] (85) at (6, 1) {\(85\)};
        \node[draw, inner sep=0pt, minimum size=0.6cm] (5) at (6, -1) {\(5\)};
        \node[draw, circle, inner sep=0pt, minimum size=0.6cm] (55) at (7, 1) {\(55\)};
        \node[draw, inner sep=0pt, minimum size=0.6cm] (11) at (7, -1) {\(11\)};
        \node[draw, circle, inner sep=0pt, minimum size=0.6cm] (77) at (8, 1) {\(77\)};
        \node[draw, inner sep=0pt, minimum size=0.6cm] (7) at (8, -1) {\(7\)};
        \node[draw, circle, inner sep=0pt, minimum size=0.6cm] (91) at (9, 1) {\(91\)};
        \node[draw, inner sep=0pt, minimum size=0.6cm] (13) at (9, -1) {\(13\)};
        \node[draw, circle, inner sep=0pt, minimum size=0.6cm] (65) at (10, 0) {\(65\)};
        \draw (65) -- (13) -- (91) -- (7) -- (77) -- (11) -- (55) -- (5) -- (85) -- (17) -- (51) -- (3) -- (87) -- (29) -- (58) -- (2) -- (62) -- (31) -- (93) -- (3);
    \end{tikzpicture}
    \caption{An induced subgraph of the divisibility graph for \(n = 100\) on the vertices relevant to a particular winning strategy.}
    \label{fig:strategy}
\end{figure}

One can check that the first player has a straightforward winning strategy by picking a prime \(p\) greater than \(n / 2\). This forces a response of \(1\), allowing player one to end the game by picking a second prime \(q\) also greater than \(n / 2\). This requires only the existence of at least \(2\) distinct primes \(p, q \in (n / 2, n)\). For sufficiently large \(n\), this is guaranteed to be the case. Because of this ``boring'' strategy, it is standard to introduce the additional rule.
\begin{enumerate}
    \item[(4)] The first move must be even\footnote{One might reasonably ask why this modification is not to instead restrict the first move to a composite. We asked ourselves the same thing and have no satisfying answer.}.
\end{enumerate}

Stewart described this game in \cite{stewart_art, stewart} and for \(n = 100\), describes how the first player can win with \(58\) or \(62\) as the opening move.  
A complete description of the possible sequence of moves in that case is shown in Figure \ref{fig:strategy}. It is unclear whether this particular strategy for $n=100$ can generalize. Stewart's article finishes with the following question.
\begin{quote}
What about [...] completely general $n$?  Can anyone find patterns?  Or solve the whole thing? (Ian Stewart \cite{stewart_art})
\end{quote}

\begin{figure}[h!]
	\centering
    \includegraphics[scale=0.2]{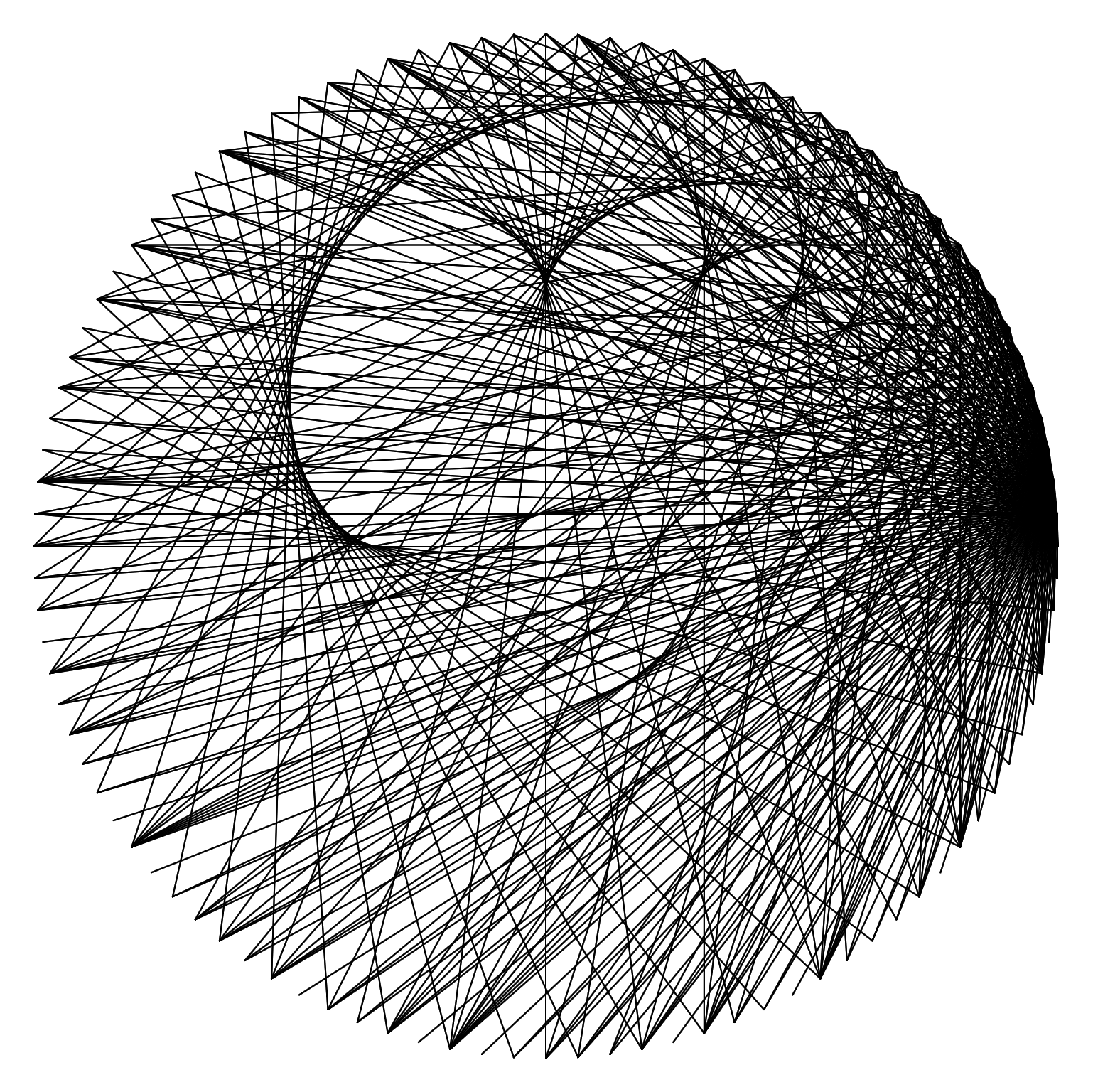} 
    \caption{The divisibility graph on \([n]\) for \(n = 100\), where the vertices are arranged in a circle. In particular, vertex \(i\) is placed at \(\left(\cos(2\pi i / n), \sin(2\pi i / n)\right) \in \mathbb{R}^2\). The curves that appear as envelopes of these edges are epicycloids.}
    \label{fig:div_graph}
\end{figure}

\subsection{Lemoine's Theorem} Stewart's question was completely answered by Julien Lemoine in a 2022 paper.

\begin{thm}[Lemoine \cite{lemoine}] For any $n \geq 120$, the first player has a winning strategy.
\end{thm}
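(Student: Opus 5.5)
The plan is to reduce the game to a matching problem on the divisibility graph and then build the required matching by hand. The basic tool is the standard fact about \emph{undirected vertex geography}: in a game where players alternately extend a simple path in a graph $G$ from a given start vertex $v$, the player about to move (from $v$) wins if and only if every maximum matching of $G$ covers $v$. Equivalently, $v$ is not in the Gallai--Edmonds set $D(G)$. The winning strategy is always to move to the partner of the current vertex in a fixed maximum matching that avoids a suitable vertex. Juniper Green with first move $e$ (even) is geography on $G_n - e$ started at $e$, once we regard the opening as already played. Player one therefore wins by opening with an even $e$ such that $e$ is \emph{not} matched in some maximum matching $M$ of $G_n$. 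Indeed, $M$ is then a maximum matching of $G_n-e$, and player one answers every move $x$ with $M(x)$; that partner always exists and is unused, since otherwise an augmenting path would contradict maximality. So the theorem reduces to the following: for every $n\ge 120$ there is an even $e\le n$ lying in $D(G_n)$.

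To produce such an $e$, I would imitate Stewart's $n=100$ example, which opens with $e=2p$ for a prime $p\in(n/3,n/2]$. The neighbours of $2p$ are $1,2,p$, and $p$ has neighbours $1,2p$, and also $3p$ only if $3p\le n$, which fails here. I would then exhibit a maximum matching missing $2p$. Take $M$ to match $p\leftrightarrow 2p'$ for another prime $p'$, or more simply match $p$ with $1$, and match every other vertex greedily. It is cleaner to go through Tutte--Berge: find a set $S$ such that $\mathrm{odd}(G_n-S)-|S|$ equals the deficiency, with $2p$ lying in an odd component. Concretely, each prime $q\in(n/2,n]$ is adjacent only to $1$, so the set $S=\{1\}\cup\{\text{small hubs}\}$ isolates many singletons. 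The two-prime structure is what matters: if $p_1,p_2\in(n/3,n/2]$ are primes, the vertices $\{p_i,2p_i\}$ hang off $1$ and $2$ only. Removing $S=\{1,2\}$ together with the large primes gives components, and the path-like components $p_i - 2p_i$ are even. So I would instead aim for the structure of Figure~\ref{fig:strategy}: a chain that forces an odd count.

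The practical route I would commit to is Lemoine's style, using explicit case strategies plus the matching criterion. First, by Bertrand/Nagura-type bounds (primes in $(x,6x/5]$ for $x\ge 25$), guarantee for $n\ge 120$ several primes in each of the windows $(n/2,n]$, $(n/3,n/2]$ and $(n/4,n/3]$. Next, construct an explicit maximum matching of $G_n$. Match each large prime $q>n/2$ to nothing except one of them to $1$. Match $p\in(n/3,n/2]$ to $2p$ except for one chosen $p_0$, and then arrange an alternating chain $2p_0 - p_0 - 3p_0$ or $2p_0 - 2 - \dots$ so that $2p_0$ is left exposed, while the deficiency count proves maximality via a Tutte--Berge witness $S$ consisting of $1$ together with the multiples forced in the chain, as in Figure~\ref{fig:strategy}. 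Finally, handle the finitely many small $n$ near $120$ by computer, by computing $D(G_n)$.

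The main obstacle is the maximality certificate. I need an explicit barrier $S$ for general $n$ whose odd-component count matches the constructed matching, and the bulk of $[n]$, the smooth numbers, is highly connected. I would try to show that $G_n-\{1\}-(\text{large primes})$ restricted to composites has a perfect or near-perfect matching, using $m\mapsto 2m$ pairings on dyadic blocks. That would pin the deficiency to the number of large primes minus one, and then verify that $2p_0$ can be made the extra exposed vertex.
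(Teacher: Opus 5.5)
Your reduction is correct and is exactly the paper's main result: under the even-opening rule, player one wins if and only if some even $e\le n$ lies in $D(G_n)$. However, Lemoine's theorem is precisely the claim that such an $e$ exists for every $n\ge 120$, and that part of your proposal is missing or wrong.

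Your starting candidate, $2p$ with $p\in(n/3,n/2]$, misreads Stewart's example. For $n=100$ one has $29,31\in(n/4,n/3]$, which is why $87=3\cdot 29$ and $93=3\cdot 31$ appear in Figure~\ref{fig:strategy}. For $p\in(n/3,n/2]$ the vertex $2p$ is \emph{never} in $D(G_n)$. Since $N(p)=\{1,2p\}$, a matching exposing $2p$ must match $p$ to $1$. That leaves every prime $q\in(n/2,n]$ exposed, and then $q,1,p,2p$ is an augmenting path, however the rest of the graph is matched. In game terms, player two answers $p$, player one is forced to $1$, and player two plays $q$ and wins. So neither your chain $2p_0-p_0-3p_0$ (note $3p_0>n$) nor any chain through $2$ can leave $2p_0$ exposed.

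The decisive step is a maximum matching exposing an even vertex, together with a certificate of maximality. You leave it as ``the main obstacle,'' and the route you sketch would prove the opposite of the theorem. Let $L$ be the set of primes in $(n/2,n]$. Each has $1$ as its only neighbor, so every matching exposes at least $|L|-1$ of them. If $G_n-\{1\}-L$ had a perfect matching, the deficiency of $G_n$ would be exactly $|L|-1$. Every maximum matching would then expose only vertices of $L$, giving $D(G_n)\subseteq L$, a set of odd numbers, so player one would lose. There is no ``extra exposed vertex'' for $2p_0$ to be.

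What is actually needed is that the deficiency is at least $|L|$ and that an even vertex can be among the exposed ones. The argument the paper cites does not produce a Tutte--Berge barrier for this. Instead it uses explicit forcing lines for player one of the kind in Figure~\ref{fig:strategy}; by the main result, a winning line is itself the certificate. It relies on three primes in $[n/4,n/3]$ for large $n$, plus an exhaustive check below that. Your Nagura bound does not supply this: since $6/5<4/3<(6/5)^2$, it guarantees only one prime in $(n/4,n/3]$. Without a general construction you also have no explicit threshold, so the ``finitely many'' cases to hand to the computer are not determined.
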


The argument relies on the fact that there always exist \(3\) primes in \([n / 4, n / 3]\) for \(n\) sufficiently large, a similar strategy for the first player can be generalized for arbitrarily large \(n\), leaving only finitely many cases to directly analyze. Lemoine furthermore directly analyzes the finitely many remaining cases in $n \leq 120$ and tabulates which player can force a win by an exhaustive analysis.

\section{Result}
We were motivated by Stewart's question whether more can be said: is it possible to understand the structure of winning moves for each $n$? In this sense, we refer to Juniper Green as the game subject to rules (1) -- (3) and not (4) with the understanding that starting with a large prime number is a winning move for player one (which was the reason for rule (4) in the first place). Our main result can be informally summarized by saying that winning moves in Juniper Green correspond to a specific set in the Gallai-Edmonds decomposition of the divisibility graph on \([n]\) (see Figure \ref{fig:div_graph}). We recall that the divisibility graph \(G_n\) is a graph with vertices $V = \left\{1,2,\dots, n\right\}$, where an edge exists between \(i, j\) when either \(i | j\) or \(j | i\). 

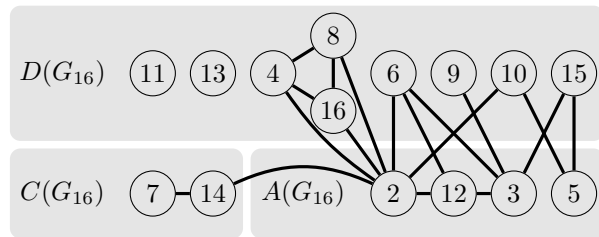
\begin{figure}[h!]
    \centering
    \begin{tikzpicture}[
        every node/.style={draw, circle, inner sep=0pt, minimum size=0.6cm},
        edge/.style={very thick, black}
    ]
        \fill[rounded corners, gray, opacity=0.2] (0.2, 2.9) rectangle (-7.7, 1.1);
        \fill[rounded corners, gray, opacity=0.2] (0.2, -0.2) rectangle (-4.5, 1.0);
        \fill[rounded corners, gray, opacity=0.2] (-4.6, -0.2) rectangle (-7.7, 1.0);

        \node[draw=none] at (-7.0, 2) {\(D(G_{16})\)};
        \node[draw=none] at (-3.8, 0.4) {\(A(G_{16})\)};
        \node[draw=none] at (-7.0, 0.4) {\(C(G_{16})\)};

        \node (4)  at (-4.2, 2) {\(4\)};
        \node (8)  at (-3.4, 2.5) {\(8\)};
        \node (16) at (-3.4, 1.5) {\(16\)};
        \node (6)  at (-2.6, 2) {\(6\)};
        \node (9)  at (-1.8, 2) {\(9\)};
        \node (10) at (-1.0, 2) {\(10\)};
        \node (15) at (-0.2, 2) {\(15\)};
        \node (11) at (-5.8, 2) {\(11\)};
        \node (13) at (-5.0, 2) {\(13\)};

        \node (2)  at (-2.6, 0.4) {\(2\)};
        \node (12) at (-1.8, 0.4) {\(12\)};
        \node (3)  at (-1.0, 0.4) {\(3\)};
        \node (5)  at (-0.2, 0.4) {\(5\)};

        \node (7)  at (-5.8, 0.4) {\(7\)};
        \node (14) at (-5.0, 0.4) {\(14\)};

        \draw[edge] (7) -- (14);
        \draw[edge] (14) edge[bend left=25] (2);
        \draw[edge] (2) edge[bend left=10] (4);
        \draw[edge] (4) -- (8) -- (16) -- (4);
        \draw[edge] (12) -- (6) -- (2);
        \draw[edge] (8) edge[bend left=0] (2);
        \draw[edge] (16) -- (2);
        \draw[edge] (9) -- (3) -- (6);
        \draw[edge] (2) -- (10) -- (5) -- (15) -- (3);
        \draw[edge] (2) -- (12);
        \draw[edge] (3) -- (12);
    \end{tikzpicture}
    \caption{The Gallai-Edmonds Decomposition of the divisibility graph \(G_{16}\) with \(1\) (\(1 \in A(G_{16})\)) omitted to reduce clutter. For $n$ sufficiently large, \(1\) will always be a member of \(A(G_n)\).}
\end{figure}

%
\subsection{The Gallai-Edmonds decomposition} We first describe the main ingredient which was discovered, independently, by Tibor Gallai \cite{gallai1963kritische, gallai1964maximale} and Jack Edmonds \cite{edmonds1965paths}. Given a graph $G=(V,E)$, the Gallai-Edmonds decomposition is a partition of the vertex set into three disjoint sets 
$$ V = D(G) \cup A(G) \cup C(G), $$
as follows: a vertex is said to either be \textit{essential} (if it is covered by every maximum matching in $G$
) or \textit{inessential}. The decomposition is now as follows:
\begin{enumerate}
    \item \(D(G) \subseteq V\) consists of all \textit{inessential} vertices.
  \item \(A(G) \subseteq V\) contains all essential vertices with an inessential neighbor.
        \item \(C(G) \subseteq V\) contains all remaining essential vertices.
\end{enumerate}

\subsection{Main Result.}  
We can now state our main result.

\begin{thm}[Main Result] The winning moves for player one in Juniper Green on \([n]\) are precisely the vertices in $D(G_n)$ of the Gallai-Edmonds decomposition of the divisibility graph \(G_n\). 
\end{thm}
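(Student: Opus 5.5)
This is Undirected Vertex Geography: the game is a walk in $G_n$ that never revisits a vertex, the player who moves onto a vertex extends the path, and the player who cannot move loses. Player one picks a starting vertex $v$ and player two must then move to a neighbor of $v$. The classical theorem of Fraenkel, Scheinerman and Ullman characterizes this game. In UVG on $G$ with token starting at $v$, the player to move wins if and only if $v$ is covered by every maximum matching. Here player two faces a token at $v$ already occupied, so player two wins iff $v$ is essential. Hence player one wins iff $v$ is inessential, i.e. $v\in D(G_n)$. I would prove this self-contained via the two matching strategies below.

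\textbf{Player one wins from an inessential $v$.} Choose a maximum matching $M$ missing $v$. Whenever player two moves to some $u$, player one answers with the $M$-partner of $u$. I must show $u$ is always $M$-matched and its partner is unused. Suppose the path is $v=x_0,x_1,x_1',\dots,x_k,x_k'$ with $x_i x_i'\in M$, and player two moves to $u=x_{k+1}$. If $u$ were $M$-unmatched, then $v,x_1,x_1',\dots,x_k,x_k',u$ would be an $M$-augmenting path between two unmatched vertices, contradicting maximality. So $u$ is matched. Its partner is unused because every used vertex other than $v$ is matched within the path, and $v$ is unmatched. So player two eventually loses.

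\textbf{Player two wins from an essential $v$.} Fix a maximum matching $M$, which covers $v$. Player two always answers with the $M$-partner of the vertex just played, starting with the partner of $v$. Suppose player one moves to some $u$ that is $M$-unmatched. The path then has the form $v,v',y_1,y_1',\dots,y_k,y_k',u$, an alternating path of even length from $v$ (matched) to $u$ (unmatched). Swapping matched and unmatched edges along it gives a maximum matching of the same size that misses $v$. This contradicts $v$ being essential. So $u$ is matched, its partner is unused (as before), and player two never gets stuck.

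The main subtlety is the bookkeeping showing the partner is unused. Every used vertex except the start is paired with another used vertex via $M$, and in the second case $v$ is paired with $v'$. So a used vertex's partner is used, and an unused $u$'s partner cannot be used. I would also state explicitly that, with rules (1)--(3) only, the first chosen number has no constraint, so the reduction to UVG on $G_n$ is exact. Finally, "winning moves for player one" means opening moves, and the partition definition shows these are exactly $D(G_n)$.
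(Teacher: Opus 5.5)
Your proof is correct. It uses the same matching-partner strategies as the Berge argument the paper relies on, and your first half (an inessential opening $v$ wins, because a reply at an $M$-unmatched vertex would close an $M$-augmenting path from $v$) is exactly the paper's argument.

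The difference is in the converse. The paper gives a winning strategy for player two only when $M$ is a perfect matching, and then asserts that Berge's proof identifies the winning openings vertex by vertex. But $G_n$ has no perfect matching whenever $(n/2,n]$ contains two primes, since both are adjacent only to $1$. This holds for every $n \ge 11$, so the perfect-matching case covers only a few small $n$. As a result, the paper never actually shows that an essential opening loses when $G_n$ has no perfect matching.

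Your second half supplies exactly this step. You fix a maximum matching $M$ covering the essential vertex $v$, and player two answers with $M$-partners. A move by player one to an $M$-unmatched vertex would complete an even alternating path beginning with the matched edge $vv'$, and switching along that path gives a maximum matching missing $v$, a contradiction. This is the Fraenkel--Scheinerman--Ullman argument for undirected vertex geography. It is what upgrades Berge's global criterion (does some winning opening exist?) to the per-vertex statement that the winning openings are exactly $D(G_n)$.

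The paper's route is shorter because it defers to a classical theorem. Yours is self-contained and actually closes the step the main result needs.
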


The result follows from Berge's Theorem \cite{berge1957two, berge1996combinatorial}, see \S 3 for the details and a proof. As is often the case, it is difficult to answer a question without raising a new question; here, the new question evidently becomes
\begin{quote}
    What can be said about the Gallai-Edmonds decomposition of the divisibility graph on \([n]\)?
\end{quote}

As shown in Figure \ref{fig:dac_ratios}, it appears nontrivial to say very much at all. The result of Lemoine implies that $|D(G_n) \cap 2\mathbb{N}| \geq 1$ once $n \geq 120$. Computations using the blossom algorithm \cite{Jungnickel2013} to construct the decomposition suggest a most curious picture. It appears as if $|A(G_n)|$ is constantly small while $|C(G_n)|$ and $|D(G_n)|$ seem to dramatically alternate in size: these transitions seem to occur whenever $n$ has many small prime factors (making it a particularly centrally vertex in the divisibility graph). This makes intuitive sense as illustrated with the following example. \(G_{101}\) does not differ much from \(G_{100}\), as only one new vertex and one new edge are added; however, \(G_{100}\) differs very much from \(G_{99}\), as from \(99\) to \(100\) adds many edges to the new vertex. That said, it remains unclear how one can make this precise.
From Figure \ref{fig:dac_membership}, there are a number of observations that can be made. It appears that the majority of the elements of \(A(G_n)\) are less than \(n / 3\), and there are no members of \(A(G_n)\) between \(n / 3\) and \(n / 2\). The interval \([n / 2, 2n / 3]\) seems to have the highest density of elements of \(D(G_n)\). Vertical bands that seem to correspond to primes appear to belong either to \(D(G_n)\) or \(C(G_n)\) depending on \(n\). Horizontal bands seem to indicate some kind of persistence a particular number has in terms of which part of the Gallai-Edmonds Decomposition it belongs to. It is easy to make further guesses, but it seems difficult to prove anything along these lines rigorously.

\begin{figure}[h!]
    \centering
    \begin{tikzpicture}
        \node at (0, 0) {\includegraphics[scale=0.6]{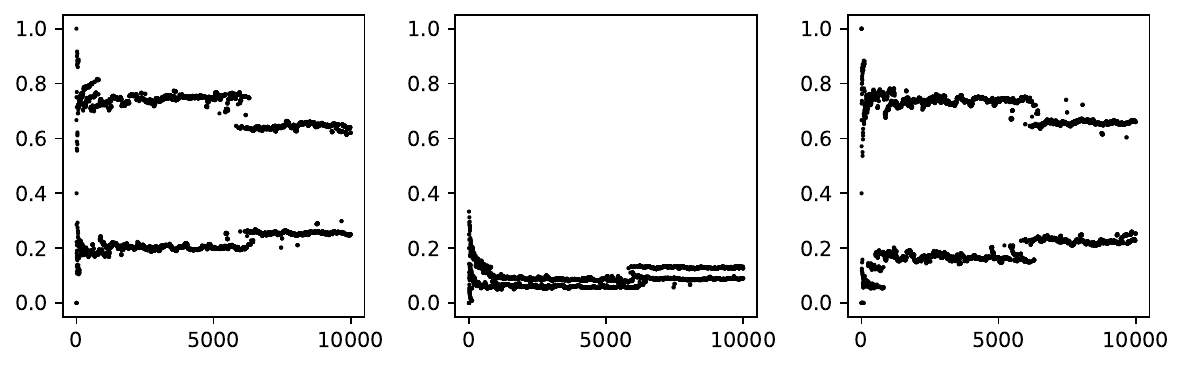}};
        \node at (-3.9, -2) {\(|D(G_n)|/n\)};
        \node at (0.1, -2) {\(|A(G_n)|/n\)};
        \node at (4.1, -2) {\(|C(G_n)|/n\)};
    \end{tikzpicture}
    \caption{Proportions of \(D(G_n)\), \(A(G_n)\), and \(C(G_n)\).}
    \label{fig:dac_ratios}
\end{figure}

\section{Proof of the Main Result}

Juniper Green is a specific instance of the `snake in the box' game on an undirected graph as described in \cite{berge1996combinatorial}. Snake in the box is a two player combinatorial game subject to the following rules. 
\begin{enumerate}
    \item Players take turns selecting vertices of a graph \(G\) such that each selected vertex must be a neighbor of the previous vertex.
    \item Once a vertex has been used, it cannot be selected again.
    \item A player loses if they have no legal moves.
\end{enumerate}
Notice that indeed, the above rules are entirely analogous to the rules described in \S 1. 
We can therefore understand Juniper Green as snake in the box played on the graph with vertex set \([n]\) and edge set \(\{(i, j) : i|j\}\).
In other words, Juniper Green is snake in the box played on the divisibility graph \(G_n\). 

Recall that a matching of a graph is a set of edges, no two of which share an edge. A maximum matching is simply a maximum cardinality matching. Note that this is not to be confused with a maximal matching, which is a matching \(M\) such that \(M \cup \{e\}\) is not a matching for all \(e \in E \setminus M\). Using maximum matchings, Berge provides a necessary and sufficient condition for the first player to have a winning strategy for snake in the box.

\begin{thm}[Berge, \cite{berge1996combinatorial}]
    For snake in the box on a graph \(G\), the first player has a winning strategy if and only if a maximum matching of \(G\) does not cover every vertex.
\end{thm}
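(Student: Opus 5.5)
The plan is to prove both directions directly with the strategy-stealing idea of always answering along a matching edge. In snake in the box the first player chooses any starting vertex, and the players then alternate moving to an unused neighbour of the last chosen vertex. In each direction, one player will follow the rule ``reply with the partner, in a fixed matching $M$, of the vertex just played''. The point to check is that this reply is always legal.

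\emph{If $G$ has a perfect matching, the second player wins.} Fix a perfect matching $M$. Whatever vertex $x$ the first player selects, the second player answers with its $M$-partner $y$. This is legal because $xy\in E$, provided $y$ has not been used yet. I will prove by induction that after each reply of the second player, the set of used vertices is a union of $M$-edges. At the start the used set is empty. Suppose the claim holds and the first player now picks an unused vertex $x$. Then $x$ does not lie in any previously used $M$-edge, so its partner $y$ does not either, and $y$ is therefore unused. After the reply the used set is again a union of $M$-edges. Hence the second player always has a move. Since the game is finite, the first player is eventually the one left without a move.

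\emph{If $G$ has no perfect matching, the first player wins.} Let $M$ be a maximum matching and let $v$ be a vertex not covered by $M$. The first player opens with $v$. Afterwards, whenever the second player picks a vertex $x$, the first player replies with the $M$-partner of $x$. Label the moves $v, x_1, y_1, x_2, y_2, \dots$, where the $x_i$ are the second player's moves and $y_i$ is the $M$-partner of $x_i$. Two things must be checked at the moment the second player picks $x_k$.
\begin{enumerate}
\item[(a)] $x_k$ is covered by $M$. Suppose not. Then the sequence $v, x_1, y_1, \dots, x_{k-1}, y_{k-1}, x_k$ consists of distinct vertices, since none is ever reused. Consecutive vertices are adjacent. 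The edges $x_iy_i$ lie in $M$, and the remaining edges $vx_1$ and $y_{i}x_{i+1}$ do not: the endpoint $v$ is uncovered, and $y_i$ is already matched to $x_i$. So this is an $M$-alternating path joining the two uncovered vertices $v$ and $x_k$, that is, an $M$-augmenting path. Swapping along it yields a larger matching, which contradicts the maximality of $M$.
\item[(b)] The $M$-partner $y_k$ of $x_k$ is unused. The used vertices are $v$ and the pairs $\{x_i,y_i\}$ for $i<k$. The vertex $y_k$ is not $v$, because $v$ is uncovered. It is not in any earlier pair, because those vertices are matched to each other and $x_k$ is not among them.
\end{enumerate}
So the first player always has a reply, and the second player eventually loses.

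The main obstacle is step (a). It is the only place where maximality, rather than mere existence, of the matching is used. The care needed is to verify that the move sequence really is a simple alternating path with uncovered endpoints; this follows because vertices are never reused and because $v$ was chosen to be uncovered. The rest is bookkeeping. Combining the two directions gives Berge's criterion. Since a vertex is in $D(G)$ exactly when some maximum matching misses it, the argument of (a) and (b) also shows that each vertex of $D(G_n)$ is a winning opening move, which is what the Main Result requires.
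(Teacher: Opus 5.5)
Your proof is correct and is essentially the paper's (Berge's) argument: the favoured player answers along a fixed maximum matching, and your step (a) is exactly the paper's appeal to the alternating chain lemma, made explicit through the augmenting path $v, x_1, y_1, \dots, x_k$ (so only the easy direction of that lemma is needed). Two minor quibbles: this is a pairing strategy rather than strategy stealing, and your closing remark only shows that every vertex of $D(G_n)$ is a winning opening, not the converse that the Main Result also asserts.
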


\begin{figure}[h!]
    \centering
    \includegraphics[scale=0.4]{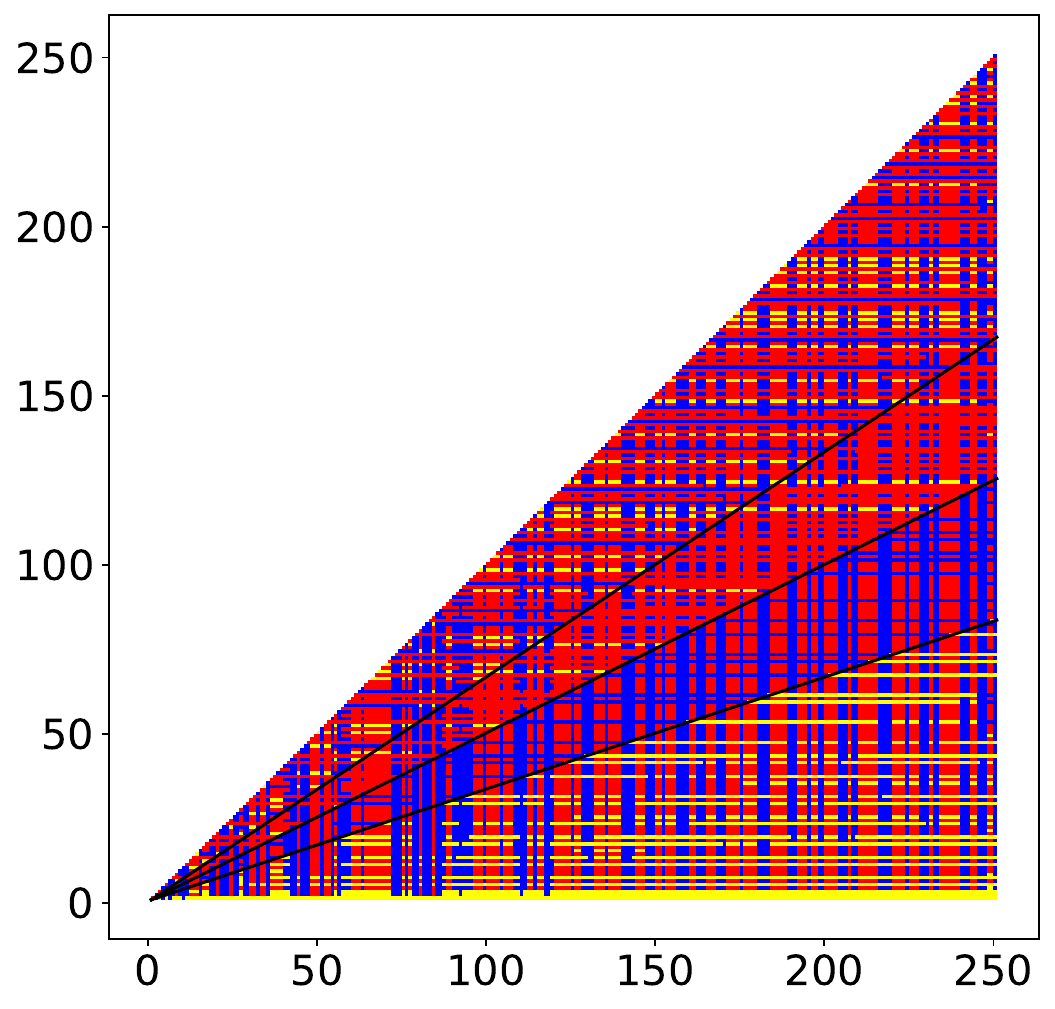}
    \caption{The elements of \([n]\) plotted against \(n\) colored by whether they belong to \(D(G_n)\) (red), \(A(G_n)\) (yellow), or \(C(G_n)\) (blue). The black lines plotted are \(n / 3\), \(n / 2\), and \(2n / 3\).}
    \label{fig:dac_membership}
\end{figure}

The proof of this theorem is quite elegant, using Berge's alternating chain lemma. The proof is important for our purposes as it provides as a corollary an exact set of moves the first player can make to force a win. 

\begin{lemma}[Alternating Chain Lemma]
    A matching \(M\) of a graph \(G\) is a maximum matching if and only if there are no two vertices uncovered by \(M\) that are connected by an alternating chain, i.e. a chain whose edges alternate between belonging to \(M\) and \(E(G) \setminus M\).
\end{lemma}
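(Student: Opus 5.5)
We need to prove the Alternating Chain Lemma, i.e. Berge's lemma: a matching $M$ is maximum if and only if there is no $M$-augmenting path, meaning an alternating chain whose two endpoints are distinct vertices uncovered by $M$. (We read ``alternating chain'' as a simple path. A path joining two uncovered vertices must begin and end with non-matching edges, since its endpoints touch no edge of $M$.) We prove both directions by contraposition, using symmetric differences of matchings.

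\emph{If $M$ is maximum, there is no such chain.} Suppose $P$ is an alternating chain joining two distinct uncovered vertices $u,v$. Its edges alternate and its first and last edges lie outside $M$. So $P$ has exactly one more edge in $E(G)\setminus M$ than in $M$. Form $M' = M \triangle E(P)$.

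We check that $M'$ is a matching.
\begin{itemize}
\item Each interior vertex of $P$ is incident to exactly one edge of $M\cap E(P)$ and one edge of $E(P)\setminus M$. After the swap it is still covered exactly once, and its edges off $P$ are unchanged.
\item The endpoints $u,v$ were uncovered by $M$. They now receive exactly one edge each.
\end{itemize}
Since $|M'| = |M|+1$, the matching $M$ was not maximum.

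\emph{If there is no such chain, $M$ is maximum.} Suppose $M$ is not maximum and let $M^*$ be a matching with $|M^*|>|M|$. Consider the subgraph $H$ with edge set $M\triangle M^*$.
\begin{itemize}
\item Every vertex has degree at most $2$ in $H$, at most one edge from each matching. Hence each component of $H$ is a path or a cycle whose edges alternate between $M$ and $M^*$.
\item Alternating cycles have even length, so they contain equally many edges of $M$ and $M^*$.
\item Since $H$ has strictly more $M^*$-edges than $M$-edges, some component must be a path with more $M^*$-edges than $M$-edges. By alternation, such a path begins and ends with $M^*$-edges, which are not in $M$.
\end{itemize}

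It remains to check that the endpoints of this path are uncovered by $M$. This is the one delicate step. Let $x$ be an endpoint. It has degree $1$ in $H$, and that edge lies in $M^*\setminus M$. If $x$ were covered by some edge $e\in M$, then $e \ne$ that $M^*$ edge. Either $e\notin M^*$, in which case $e\in H$ and $x$ would have degree $2$ in $H$. Or $e\in M^*$, in which case $x$ would be covered twice by $M^*$. Both are impossible, so $x$ is uncovered by $M$.

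The path is therefore an alternating chain between two uncovered vertices, which contradicts the hypothesis. This completes the proof.

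The only real care needed is the bookkeeping in the two checks: that the swap $M\triangle E(P)$ gives a matching, and that the endpoints of the surplus path are $M$-uncovered. The rest is counting.
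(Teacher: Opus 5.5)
Your proof is correct and complete. The paper states this lemma without proof and defers to Berge's book, and your argument is exactly the standard proof found there: augment along $P$ via $M \triangle E(P)$ for one direction, and extract a surplus path component of $M \triangle M^*$ for the other. Reading ``alternating chain'' as a simple path is also the right choice, because an alternating walk that doubles back around an odd cycle can join two uncovered vertices even when $M$ is maximum.
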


Interestingly, the alternating chain lemma is sometimes called the alternating chain theorem or Berge's theorem (not to be confused with the above theorem). Its proof is slightly more involved, so for the interested reader, we refer to \cite{berge1996combinatorial}. We now give Berge's proof characterizing which player wins snake in the box.

\begin{proof}[Proof of Theorem]
    Let \(M\) be a maximum matching of \(G\). If \(M\) does not cover a vertex \(v_1\), the first player can select \(v_1\). The second player's response must be covered by some edge \(e = (v_2, v_3) \in M\), as otherwise, \(M' = M \cup e\) would yield a larger matching. Suppose the second player chooses \(v_2\). The first player now can choose the other vertex of \(e\), namely \(v_3\). From this point forward, the second player may not select a vertex uncovered by \(M\), as this would violate the alternating chain lemma. 
    Since the first player opened with a vertex not covered by \(M\), the second player must always select a vertex \(u\) covered by \(e' \in M\) such that if \(e' = (u, u')\), \(u'\) has not yet been selected. This means that regardless of what the second player chooses, the first player will have an available move. Therefore, the first player cannot lose, i.e. player one wins.

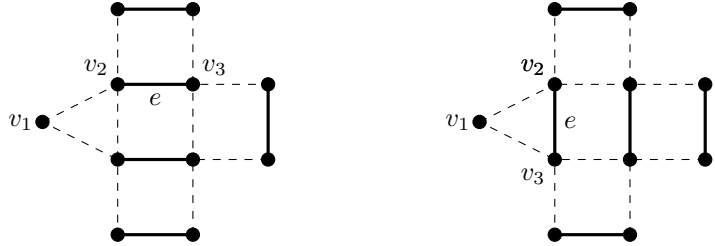
\begin{figure}[h!]
    \centering
    \begin{subfigure}{0.45\textwidth}
        \centering
        \begin{tikzpicture}
            \filldraw (-1, 0) circle (2.5pt) node[anchor=east]{\(v_1\)};
            \filldraw (0, 0.5) circle (2.5pt) node[anchor=south east]{\(v_2\)};
            \filldraw (0, 1.5) circle (2.5pt);
            \filldraw (1, 0.5) circle (2.5pt) node[anchor=south west]{\(v_3\)};
            \node[anchor=north] at (0.5, 0.5) {\(e\)};
            \filldraw (1, 1.5) circle (2.5pt);
            \filldraw (2, 0.5) circle (2.5pt);
            \filldraw (0, -0.5) circle (2.5pt);
            \filldraw (0, -1.5) circle (2.5pt);
            \filldraw (1, -0.5) circle (2.5pt);
            \filldraw (1, -1.5) circle (2.5pt);
            \filldraw (2, -0.5) circle (2.5pt);
            \draw[dashed] (-1, 0) -- (0, 0.5) -- (0, 1.5) -- (1, 1.5) -- (1, 0.5) -- (2, 0.5) -- (2, -0.5) -- (1, -0.5) -- (1, -1.5) -- (0, -1.5) -- (0, 0.5) -- (1, 0.5) -- (1, -0.5) -- (0, -0.5) -- (-1, 0);
            \draw[very thick] (0, 1.5) -- (1, 1.5);
            \draw[very thick] (0, 0.5) -- (1, 0.5);
            \draw[very thick] (0, -0.5) -- (1, -0.5);
            \draw[very thick] (0, -1.5) -- (1, -1.5);
            \draw[very thick] (2, 0.5) -- (2, -0.5);
        \end{tikzpicture}
    \end{subfigure}
    \begin{subfigure}{0.45\textwidth}
        \centering
        \begin{tikzpicture}
            \filldraw (-1, 0) circle (2.5pt) node[anchor=east]{\(v_1\)};
            \filldraw (0, 0.5) circle (2.5pt) node[anchor=south east]{\(v_2\)} node[anchor=south east]{\(v_2\)};
            \filldraw (0, 1.5) circle (2.5pt);
            \filldraw (1, 0.5) circle (2.5pt);
            \filldraw (1, 1.5) circle (2.5pt);
            \filldraw (2, 0.5) circle (2.5pt);
            \filldraw (0, -0.5) circle (2.5pt) node[anchor=north east]{\(v_3\)};
            \node[anchor=west] at (0, 0) {\(e\)};
            \filldraw (0, -1.5) circle (2.5pt);
            \filldraw (1, -0.5) circle (2.5pt);
            \filldraw (1, -1.5) circle (2.5pt);
            \filldraw (2, -0.5) circle (2.5pt);
            \draw[dashed] (-1, 0) -- (0, 0.5) -- (0, 1.5) -- (1, 1.5) -- (1, 0.5) -- (2, 0.5) -- (2, -0.5) -- (1, -0.5) -- (1, -1.5) -- (0, -1.5) -- (0, 0.5) -- (1, 0.5) -- (1, -0.5) -- (0, -0.5) -- (-1, 0);
            \draw[very thick] (0, 1.5) -- (1, 1.5);
            \draw[very thick] (0, 0.5) -- (0, -0.5);
            \draw[very thick] (1, 0.5) -- (1, -0.5);
            \draw[very thick] (0, -1.5) -- (1, -1.5);
            \draw[very thick] (2, 0.5) -- (2, -0.5);
        \end{tikzpicture}
    \end{subfigure}
    \caption{If \(M\) is a maximum matching of \(G\) and \(e \in M\), then any neighbor \(u\) of \(v_3\) must also be covered by some edge of \(M\), as otherwise, replacing \(e\) with \((v_1, v_2)\) and \((v_3, u)\) would yield a matching with higher cardinality.}
    \label{fig:acl}
\end{figure}

    If \(M\) covers every vertex of \(G\), the second player has the following winning strategy. The first player chooses some vertex \(u\), allowing the second player to respond with \(v\), where \((u, v) \in M\). Since every vertex of \(G\) is covered by some edge of \(M\), we see that the first player will always select some vertex \(u'\) such that if \(e' = (u', v') \in M\) covers \(u'\), \(v'\) is available for the second player to choose. 

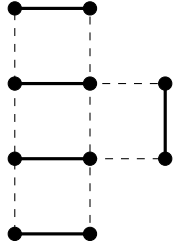
\begin{figure}[h!]
    \centering
    \begin{tikzpicture}
        \filldraw (0, 0.5) circle (2.5pt);
        \filldraw (0, 1.5) circle (2.5pt);
        \filldraw (1, 0.5) circle (2.5pt);
        \filldraw (1, 1.5) circle (2.5pt);
        \filldraw (2, 0.5) circle (2.5pt);
        \filldraw (0, -0.5) circle (2.5pt);
        \filldraw (0, -1.5) circle (2.5pt);
        \filldraw (1, -0.5) circle (2.5pt);
        \filldraw (1, -1.5) circle (2.5pt);
        \filldraw (2, -0.5) circle (2.5pt);
        \draw[dashed] (0, 0.5) -- (0, 1.5) -- (1, 1.5) -- (1, 0.5) -- (2, 0.5) -- (2, -0.5) -- (1, -0.5) -- (1, -1.5) -- (0, -1.5) -- (0, 0.5) -- (1, 0.5) -- (1, -0.5) -- (0, -0.5);
        \draw[very thick] (0, 1.5) -- (1, 1.5);
        \draw[very thick] (0, 0.5) -- (1, 0.5);
        \draw[very thick] (0, -0.5) -- (1, -0.5);
        \draw[very thick] (0, -1.5) -- (1, -1.5);
        \draw[very thick] (2, 0.5) -- (2, -0.5);
    \end{tikzpicture}
    \caption{Whatever the first player selects, the second player can always respond by following the edges of a maximum matching \(M\).}
\end{figure}

Therefore, the second player always has an available move by following \(M\), so they cannot lose, i.e. player two wins.
\end{proof}

Berge's proof gives us all the relevant tools to prove our result.

\begin{proof}[Proof of Main Result]
    Berge's proof tells us that the set of opening moves that the first player can take and win is precisely the set of inessential vertices, those elements \(v \in V\) such that there exists a maximum matching \(M\) that does not cover \(v\). Thus, since Juniper Green is snake in the box on \(G_n\), we conclude that the set of winning openings for the first player is \(D(G_n)\).
\end{proof}

\bibliographystyle{plain}
\bibliography{refs}

\end{document}